\title{The Algebra Of Schur Operators}
\author{Ricky Ini Liu}
\address{Department of Mathematics, North Carolina State University, Raleigh, NC}
\email{riliu@ncsu.edu}
\author{Christian Smith}
\address{Department of Mathematics, North Carolina State University, Raleigh, NC}
\email{casmit34@ncsu.edu}
\date{\today}
\thanks{R. I. Liu and C. Smith were partially supported by National Science Foundation grant DMS-1700302.}
\newcommand{\YY}{\mathbf Y}
\newcommand{\CC}{\mathbf C}
\newcommand{\NN}{\mathbf N}
\newcommand{\U}{\mathcal U}
\newcommand{\s}[1]{\mathsf{#1}}
\DeclareMathOperator{\rw}{rw}
\newcommand{\Kequiv}{\stackrel{K}{\sim}}
\newtheorem{theorem}{Theorem}[section]
\newtheorem*{theorem*}{Theorem}
\newtheorem{corollary}[theorem]{Corollary}
\newtheorem{proposition}[theorem]{Proposition}
\newtheorem{lemma}[theorem]{Lemma}
\theoremstyle{definition}
\newtheorem{example}[theorem]{Example}
\begin{document}

\maketitle

\begin{abstract}
We study a representation of the (local) plactic monoid given by Schur operators $u_i$, which act on partitions by adding a box in column $i$ (if possible). In particular, we give a complete list of the relations that hold in the algebra of Schur operators.
\end{abstract}

\section{Introduction}
The \emph{Schur operator} (or \emph{column box-adding operator}) $u_i$ for $i = 1, 2, \dots$ acts on partitions $\lambda$ by adding a box to the $i$th column of the Young diagram of $\lambda$ if the resulting diagram is a partition, otherwise $u_i$ sends $\lambda$ to $0$. These operators were introduced by Fomin in \cite{fomin} and also described by Fomin and Greene in \cite{fomingreene} in their development of the theory of noncommutative Schur functions (which are a useful tool for studying Schur positivity and related phenomena). They can also be thought of as refinements of the box-adding operator $U$ acting on Young's lattice as defined by Stanley \cite{stanley} in his study of differential posets.

The authors of \cite{fomingreene} observe that the Schur operators satisfy the relations of the \emph{local plactic monoid/algebra} with relations:
\begin{align*}
u_iu_j &= u_j u_i \qquad \qquad \text{for $|j - i| \geq 2$}, \\
u_i u_{i+1} u_i &= u_{i+1} u_i u_i, \\
u_{i+1} u_{i+1} u_i &= u_{i+1} u_i u_{i+1}.
\end{align*}
However, they remark that the full set of relations satisfied by the $u_i$ is unknown. In this paper we describe the complete set of relations among the $u_i$ and thereby give a full characterization of the \emph{algebra of Schur operators} by proving the following theorem.

\begin{theorem*}
	The algebra of Schur operators is defined by the relations:
	\begin{align*}
	u_iu_j &= u_j u_i \qquad\qquad \text{ for } |j - i| \geq 2, \\
	u_i u_{i+1} u_i &= u_{i+1} u_i u_i, \\
	u_{i+1} u_{i+1} u_i &= u_{i+1} u_i u_{i+1},  \\
	u_{i+1} u_{i+2} u_{i+1} u_i &= u_{i+1} u_{i +2} u_i u_{i+1}. 
	\end{align*}
\end{theorem*}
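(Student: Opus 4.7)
The plan is to prove the stated theorem in three stages: show that the new relation really does hold among Schur operators, establish a normal form for words modulo the four relations, and then show that distinct normal forms act differently on at least one partition.

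First, verify the new relation $u_{i+1} u_{i+2} u_{i+1} u_i = u_{i+1} u_{i+2} u_i u_{i+1}$ by a direct case analysis on a generic partition $\lambda$. Since the shape of any nonzero output equals $\lambda$ plus the multiset of column indices in the word, it suffices to check that both sides are nonzero on exactly the same partitions and that when nonzero they agree. Splitting into cases on the column lengths $\lambda'_{i-1}, \lambda'_i, \lambda'_{i+1}, \lambda'_{i+2}$ (with $\lambda'_j$ denoting the length of column $j$), one verifies that both sides vanish exactly when no valid intermediate path exists, and otherwise both produce $\lambda$ with one box added in column $i$, two in column $i+1$, and one in column $i+2$.

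Second, establish a normal form for words in the abstract monoid defined by the four relations. A natural candidate is the lexicographically smallest representative of each equivalence class, which should correspond to a reading word of a certain tableau-like object. The concrete approach is to orient each relation as a rewriting rule (so that each rule strictly decreases some well-founded order on words), then verify local confluence by examining all critical pairs arising from overlaps between the rules, including self-overlaps of the new 4-letter relation and overlaps between it and the 3-letter local plactic rules. Newman's lemma then yields unique normal forms. The equivalence classes ought to be enumerated by a combinatorially natural family, and recognizing this family is what makes the normal form meaningful.

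Third, show that words with distinct normal forms induce distinct Schur operators. Since two equivalent words must have the same content (multiset of letters), fix a content and compare two inequivalent normal forms $w_1, w_2$ on a test partition $\lambda$ chosen so that every intermediate partition in the action of either word is a valid partition. A staircase $\lambda = (N, N-1, N-2, \dots)$ with $N$ sufficiently large is a natural choice: on such $\lambda$ no $u_i$ ever returns $0$, and the sequence of shapes recorded by the action of $w_j \cdot \lambda$ and its refinements (e.g., the action of every prefix) encodes enough of $w_j$ to recover its normal form.

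The main obstacle is the second step. Checking the relation and distinguishing normal forms on staircase inputs are both essentially mechanical, but proving that the four relations generate the full congruence requires either a delicate confluence analysis or an explicit bijection between equivalence classes and a combinatorial set whose cardinality can be computed independently. The new 4-letter relation interacts nontrivially with the 3-letter local plactic moves, so the set of critical pairs is large, and it is conceivable that some reductions only become available after several consecutive rewrites; handling this cleanly — likely by identifying the normal forms with a family of semistandard-type tableaux and proving the bijection directly rather than by brute-force Knuth–Bendix — is where the real work lies.
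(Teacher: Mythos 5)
There is a genuine gap, and it is concentrated in your third step. On a staircase $\lambda = (N, N-1, \dots)$ with $N$ large, no Schur operator ever returns $0$, so $u_x(\lambda)$ depends only on the multiset of letters of $x$: any two words of the same content act \emph{identically} on such a $\lambda$. A large staircase therefore cannot distinguish any two inequivalent normal forms of the same content, which is exactly the case you need to handle. Your suggestion to also record ``the action of every prefix'' does not repair this: the operator $u_x$ is a single linear map, and the intermediate partitions are not part of its data --- using them would let you recover the word itself and hence ``distinguish'' words that genuinely are equivalent. The correct separating inputs are partitions chosen so that one word vanishes and the other does not; the paper encodes this via the statistic $\alpha_i(x) = \max\{w_{i+1}(\tilde x) - w_i(\tilde x)\}$ over suffixes $\tilde x$, and proves that $u_x \equiv u_y \pmod I$ if and only if $w(x)=w(y)$ and $\alpha(x)=\alpha(y)$. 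This invariant, not the output shape on a generic partition, is what separates inequivalent operators.

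Your second step --- completeness of the four relations --- is the heart of the theorem, and you defer it: you propose a Knuth--Bendix/confluence analysis or a bijection with a tableau-like family, but carry out neither, and you correctly suspect that some reductions only become available after several consecutive rewrites (the paper's Lemma~\ref{degreefourlemma} is precisely such a multi-step derived relation). The paper avoids confluence entirely: it inducts on the size of the alphabet, interpolating between $x$ and $y$ via a word $z$ with $x[2,n]=z[2,n]$ and $y[1,2]=z[1,2]$, and reduces to single applications of the relations on two-letter subwords, with two key lemmas handling the cases where a relation involving two $2$'s must be applied across intervening larger letters. Finally, note that since $I$ is defined as an annihilator ideal in an algebra (not a monoid congruence), one must also show $I$ is generated by binomials $u_x - u_y$ (the paper's Proposition~\ref{termcor}); your proposal assumes this implicitly.
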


%

Interestingly, this algebra is somewhat more complicated than a more common related one, also described in \cite{fomingreene} (see also \cite{billeyjockuschstanley}), that is generated by \emph{diagonal box-adding operators} $\tilde u_i$ that add a box to the $i$th \emph{diagonal} of $\lambda$ if possible (where the diagonals are labeled $1, 2, \dots$ from bottom to top).
The algebra generated by such operators was shown in \cite{billeyjockuschstanley} to be the \emph{nil-Temperley-Lieb algebra} given by the relations:
\begin{align*}
\tilde u_i^2 &= 0, \\
\tilde u_i\tilde u_j &= \tilde u_j \tilde u_i \qquad \qquad \qquad \text{for $|j - i| \geq 2$}, \\
 \tilde u_i \tilde u_{i+1}\tilde u_i &= \tilde u_{i+1} \tilde u_i \tilde u_{i+1} = 0.
\end{align*}


We will begin with some preliminary background in Section $2$ and then move on to a proof of our main theorem in Section $3$.

\section{Preliminaries}

In this section, we will introduce necessary background about partitions, Knuth equivalence, and Schur operators.

\subsection{Partitions}

A \emph{partition} $\lambda = (\lambda_1, \dots, \lambda_n)$ of $|\lambda| = \sum_i \lambda_i$ is a nonincreasing sequence of nonnegative integers. (We may add or delete trailing zeroes as convenient.) To each partition, we associate a \emph{Young diagram}, which is a collection of left aligned boxes with $\lambda_1$ boxes in the first row, $\lambda_2$ boxes in the second row, and so on.  We also define the \emph{conjugate partition} $\lambda'$ to be the partition whose Young diagram is obtained from that of $\lambda$ by reflecting across its main diagonal.

The set of partitions forms a partially ordered set called \emph{Young's lattice} $\YY = (\YY, \subseteq)$, where $\lambda \subseteq \mu$ if and only if the Young diagram of $\lambda$ fits inside the Young diagram of $\mu$ (or equivalently, $\lambda_i \leq \mu_i$ for all $i$).  In this partial order, $\mu$ covers $\lambda$ if and only if $\mu / \lambda$ is a single box.  Here, $\mu / \lambda$ denotes the \emph{skew Young diagram} obtained by deleting those boxes in $\mu$ that are also contained in $\lambda$.

A \emph{semistandard Young tableau} (SSYT) of shape $\lambda$ is formed by filling each box of the Young diagram of $\lambda$ with a positive integer such that the numbers are weakly increasing within a row (read from left to right) and strictly increasing within a column (read from top to bottom).  A \emph{standard Young tableau} (SYT) is a semistandard Young tableau of shape $\lambda$ with labels $1, 2, \dots, |\lambda|$.

The \emph{reading word} $\rw(T)$ of a tableau $T$ is the word obtained by listing the entries of the tableau by rows from bottom to top, reading each row from left to right.  

\ytableausetup{centertableaux}
\begin{example}
Let $\lambda = (4,3,1)$. For the semistandard Young tableau
\[ T = 
\begin{ytableau}
 1 & 1 & 1 & 4 \\
 2 & 2 & 3 \\
 3
\end{ytableau},
\]
we have $\rw(T) = 32231114$.
\end{example}

The \emph{weight} of a tableau $T$ is the tuple $w(T) = (w_1(T), w_2(T), \dots)$, where $w_i(T)$ is the number of occurrences of $i$ in $T$. We similarly define the \emph{weight} $w(x) = (w_1(x), w_2(x), \dots)$ of any word $x$ in the alphabet $\NN = \{1, 2, \dots\}$. (Clearly $T$ and $\rw(T)$ have the same weight.)

\subsection{Schur operators}

Let $\U$ be the free associative algebra (over $\CC$) generated by $u_i$ for $i \in \NN$. Given a word $x = x_1 \cdots x_l$ in the alphabet $\NN$, we define the element $u_x = u_{x_1} \cdots u_{x_l} \in \U$. Hence the set of $u_x$ for all words $x$ forms a basis for $\U$. 

Let $\CC[\YY]$ be the complex vector space with basis $\YY$. Then $\U$ acts on $\CC[\YY]$ as \emph{Schur operators} by
\[u_i(\lambda) =
\begin{cases}
\mu & \text{if $\mu /\lambda$ is a single box in column $i$,} \\
0& \text{otherwise,}
\end{cases}\]
and $u_x(\lambda) = u_{x_1} u_{x_2} \ldots u_{x_l}(\lambda)$,
extended linearly.


\begin{example}
	Let $\lambda = (3,1)$. Then $u_2(\lambda) = (3,2)$, $u_3u_2(\lambda) = (3,3)$, but $u_2u_3u_2(\lambda) = 0$ since adding another box to the second column does not yield a partition.
	\ytableausetup{smalltableaux}
	\[\ydiagram{3,1} \;\xrightarrow{u_2}\; \ydiagram{3,2} \;\xrightarrow{u_3}\; \ydiagram{3,3} \;\xrightarrow{u_2}\; 0\]
\end{example}

Let $I$ be the two-sided ideal of $\U$ consisting of all elements that annihilate all of $\CC[\YY]$. Then two elements $u$ and $u'$ of $\U$ are equivalent modulo $I$, written $u \equiv u' \pmod I$, if $u(\lambda)= u'(\lambda)$ for all partitions $\lambda$.  We call $\U / I$ the \emph{algebra of Schur operators}.


As mentioned in the introduction, the Schur operators were introduced by Fomin \cite{fomin} and discussed by Fomin and Greene \cite{fomingreene} in their study of noncommutative Schur functions.  In particular, they observe that the Schur operators give a representation of the \emph{local plactic monoid}, meaning that the following relations hold modulo $I$:
\begin{align*}
 u_iu_j &\equiv u_j u_i \qquad\qquad \text{for $|j - i| \geq 2$}, \\
 u_i u_{i+1} u_i &\equiv u_{i+1} u_i u_i, \\
 u_{i+1} u_{i+1} u_i &\equiv u_{i+1} u_i u_{i+1}.
\end{align*}
For completeness, we will verify these relations in Section 3 below.

\subsection{Knuth equivalence and RSK}

Consider words $x = x_1 x_2 \dots$, $y = y_1 y_2 \dots$ in the alphabet $\NN = \{1, 2, 3, \dots\}$.  We say that $x$ and $y$ are \emph{Knuth equivalent}, denoted $x \Kequiv y$, if one can be obtained from the other by applying a sequence of \emph{Knuth} or \emph{plactic relations} of the form
\begin{align*}
    \dots bac \dots\;  &\Kequiv \;\dots bca \dots \qquad \text{for }a<b \leq c,\\
    \dots acb \dots\; &\Kequiv \;\dots cab \dots \qquad \text{for } a \leq b < c.
\end{align*}
Here, the ellipses indicate that the subwords occurring before and after the swapped letters remain unchanged. (The Knuth relations define the so-called \emph{plactic monoid} \cite{LascouxSchutzenberger}, of which the local plactic monoid is a quotient.)

The Robinson-Schensted-Knuth (RSK) algorithm gives a bijection between words $x$ and pairs of tableaux $(P,Q)$ where the \emph{insertion tableau} $P$ is semistandard, the \emph{recording tableau} $Q$ is standard, and $P$ and $Q$ have the same shape. (See, for instance, \cite{sagan} for more information.) The exact details of the RSK algorithm will not be important for us, as we will only need the following facts.
\begin{itemize}
	\item The insertion tableau $P$ has the same weight as $x$.
	\item Two words $x$ and $y$ are Knuth equivalent if and only if they have the same insertion tableau $P$.
	\item For any semistandard tableau $P$, the insertion tableau of $\rw(P)$ is $P$.
\end{itemize}

For instance, these facts imply the following proposition, which we will need for our main theorem. (Here and elsewhere, we use $i^k$ to denote a subword of the form $\underbrace{ii \dots i}_k$.)


%

\begin{proposition} \label{splitprop}
	Let $x$ be a word with minimum letter $i$, and let $k=w_i(x)$. Then $x$ is Knuth equivalent to a word $y = \dots i^k \dots$ in which all occurrences of $i$ are consecutive.
\end{proposition}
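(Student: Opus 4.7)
The plan is to bypass a direct Knuth-move argument entirely and invoke the RSK bijection. Specifically, I would take $y = \rw(P)$, where $P = P(x)$ is the insertion tableau of $x$, and show that this $y$ already has all its $i$'s consecutive.

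The first step is to analyze the shape of $P$. By the first listed fact about RSK, $P$ has the same weight as $x$, so $i$ is the smallest label appearing in $P$ and it appears exactly $k$ times. I would then argue that these $k$ copies of $i$ occupy precisely the first $k$ cells of the top row of $P$. Indeed, in any SSYT the entries in a column are strictly increasing from top to bottom, so a cell in row $\geq 2$ strictly exceeds the cell directly above it in row $1$, which is already $\geq i$; hence every entry outside row $1$ is $> i$. Combined with the fact that rows are weakly increasing and that $i$ is the minimum letter of $P$, all $k$ copies of $i$ must sit at the left end of row $1$.

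Now I would set $y = \rw(P)$. Since the reading word reads rows bottom-to-top and each row left-to-right, $y$ is the concatenation of the entries of rows $2, 3, \dots$ (all strictly greater than $i$), followed by row $1$, whose initial $k$ entries are $i$ and whose remaining entries are $> i$. Hence the $k$ copies of $i$ in $y$ appear as a single consecutive block $i^k$. Finally, applying the second and third listed facts about RSK, the insertion tableau of $\rw(P)$ is $P$ itself, so $x$ and $y$ share the insertion tableau $P$, which gives $x \Kequiv y$.

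Because this route is short, there is no serious obstacle; the only point to verify carefully is the confinement of the smallest letter to the start of the first row of an SSYT, which is immediate from the SSYT conditions. A direct attempt to migrate the $i$'s together using only the plactic relations would require more bookkeeping (one would need to track which relation applies based on the relative sizes of the neighbors of each $i$), and the RSK shortcut avoids this entirely.
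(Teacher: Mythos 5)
Your proof is correct and is essentially the paper's own argument: both take $y = \rw(P)$ for the insertion tableau $P$ of $x$, note that all $k$ copies of the minimal letter $i$ must occupy the left end of the first row of $P$ (so they are consecutive in the reading word), and conclude $x \Kequiv y$ from the standard RSK facts. You simply spell out the confinement of $i$ to row~$1$ in more detail than the paper does.
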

\begin{proof}
%
%
%
	Let $P$ be the insertion tableau of $x$, and let $y=\rw(P)$. Since $x$ and $y$ both insert to $P$, we have $x \Kequiv y$, and $y$ has the desired form since all $i$'s appear next to each other in the first row of $P$.
\end{proof}

\section{Results}

%
%

Recall that we define $I$ to be the ideal that gives the relations among the Schur operators acting on $\CC[\YY]$. The overall goal of this section is to show that $I$ is generated by the local plactic relations (1)--(3) and one additional type of relation (4) shown below.
\begin{align}
  u_iu_j &\equiv u_j u_i  \qquad\qquad\qquad \text{ for } |j - i| \geq 2, \\
  u_i u_{i+1} u_i &\equiv u_{i+1} u_i u_i, \\
  u_{i+1} u_{i+1} u_i &\equiv u_{i+1} u_i u_{i+1},  \\
  u_{i+1} u_{i+2} u_{i+1} u_i &\equiv u_{i+1} u_{i +2} u_i u_{i+1}. 
\end{align}

\subsection{Equivalence of words}

Our first step is to understand when $u_x \equiv u_y \pmod I$ for $u_x,u_y \in \U$.  To this end we let $\alpha(x) = (\alpha_1(x), \alpha_2(x), \ldots)$, where
\[
\alpha_i(x) = \max \{w_{i+1}(\tilde{x})   - w_i(\tilde{x}) \mid   \text{$\tilde{x}$ is a suffix of $x$} \}.
\] 
(Here, a \emph{suffix} of  $x = x_1x_2 \cdots x_l$ is a trailing subword of the form $\tilde x = x_j x_{j+1} \cdots x_l$, possibly empty.)

\begin{proposition}
\label{whenzero}
Let $\lambda$ be a partition and $x$ a word.  Then 
\[u_x(\lambda) = \begin{cases}
(\lambda_1' + w_1(x), \;\lambda_2' + w_2(x), \;\dots)' & \text{if $\alpha_i(x) \leq \lambda'_i - \lambda'_{i+1}$ for all $i$,}\\
0&\text{otherwise}.
\end{cases}\]
\end{proposition}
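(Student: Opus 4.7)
My plan is to induct on the length $l$ of $x$. The base case $l = 0$ is immediate: all $\alpha_i$ vanish, $w_i(x) = 0$, and $u_x(\lambda) = \lambda$ matches the claimed formula since $\lambda$ is the partition with column lengths $\lambda'_j$.

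For the inductive step, I write $x = x_1 x'$ with $x' = x_2 \cdots x_l$, so that $u_x(\lambda) = u_{x_1}(u_{x'}(\lambda))$. The suffixes of $x$ are precisely the suffixes of $x'$ together with $x$ itself, whence
\[
\alpha_i(x) = \max\{\alpha_i(x'),\; w_{i+1}(x) - w_i(x)\}.
\]
Applying the inductive hypothesis to $x'$, I split into two cases. If $u_{x'}(\lambda) = 0$ then $u_x(\lambda) = 0$, and any index $i$ witnessing $\alpha_i(x') > \lambda'_i - \lambda'_{i+1}$ also witnesses the same inequality for $\alpha_i(x)$. Otherwise $u_{x'}(\lambda)$ is a partition $\mu$ with $\mu'_j = \lambda'_j + w_j(x')$, and applying $u_{x_1}$ to $\mu$ either successfully adds a box in column $x_1$ (producing a partition with column lengths $\lambda'_j + w_j(x)$) or returns $0$; the latter occurs precisely when $x_1 > 1$ and $\mu'_{x_1 - 1} = \mu'_{x_1}$.

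To close the induction, I must match each outcome with whether the condition $\alpha_i(x) \leq \lambda'_i - \lambda'_{i+1}$ holds. When $u_{x_1}(\mu) = 0$, setting $i = x_1 - 1$ gives $\mu'_i = \mu'_{i+1}$, and a short calculation shows $w_{i+1}(x) - w_i(x) = \lambda'_i - \lambda'_{i+1} + 1$, so the suffix $x$ itself exhibits failure. When $u_{x_1}(\mu) \neq 0$, suffixes of $x'$ are handled by the inductive hypothesis, and for the full suffix $x$ I plan a case split on whether $x_1 \in \{i, i+1\}$: the cases $x_1 \notin \{i, i+1\}$ and $x_1 = i$ reduce immediately to $\mu$ being a partition, while the case $x_1 = i+1$ uses exactly the strict inequality $\mu'_i > \mu'_{i+1}$ that is required for the final box addition to succeed.

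The proof is largely bookkeeping, and the only mild obstacle is the last case $x_1 = i+1$ in the successful-addition step: the inequality $w_{i+1}(x) - w_i(x) \leq \lambda'_i - \lambda'_{i+1}$ and the validity condition $\mu'_i > \mu'_{i+1}$ are essentially the same statement, and recognizing this is what makes the induction close cleanly.
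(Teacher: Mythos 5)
Your proof is correct and is essentially the paper's argument made explicit: both hinge on the observation that after applying a suffix $\tilde x$ the intermediate shape has column lengths $\lambda'_j + w_j(\tilde x)$, so that $u_x(\lambda) \neq 0$ exactly when these stay weakly decreasing at every stage, which is the condition $\alpha_i(x) \leq \lambda'_i - \lambda'_{i+1}$. The paper states this directly (taking a minimal failing suffix in the zero case), while you formalize it as an induction on length peeling off the first letter; the content is the same.
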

\begin{proof}
If $u_x(\lambda) = u_{x_1} \cdots u_{x_l}(\lambda) \neq 0$, then adding boxes to columns $x_l, x_{l-1}, \dots$ of $\lambda$ must always yield a partition. Hence for all $i$ and suffixes $\tilde x$, 
\[\lambda'_{i+1} + w_{i+1}(\tilde x) \leq \lambda'_{i} + w_{i}(\tilde x),\]
so
\[w_{i+1}(\tilde x) - w_{i}(\tilde x) \leq \lambda'_{i} - \lambda'_{i+1}\] for all $i$ and $\tilde x$, which implies $\alpha_i(x) \leq \lambda_i'-\lambda_{i+1}'$.

Otherwise, if $u_x(\lambda) = 0$, then $u_{\tilde x}(\lambda)=0$ for some minimal suffix $\tilde x$, so for some $i$,
\[\alpha_{i}(x) \geq w_{i+1}(\tilde x) - w_{i}(\tilde x) > \lambda'_{i} - \lambda'_{i+1}.\qedhere\]
%
\end{proof}

\begin{corollary}
\label{wordsame}
Let $x = x_1 \ldots x_l$ and $y = y_1 \ldots y_l$ be words. Then $u_x \equiv u_y \pmod I$ if and only if $w(x) = w(y)$ and $\alpha(x) = \alpha(y)$. 
\end{corollary}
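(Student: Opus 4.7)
The plan is to leverage Proposition~\ref{whenzero}, which pins down $u_x(\lambda)$ completely in terms of $w(x)$, $\alpha(x)$, and $\lambda$. The forward direction ($\Leftarrow$) is then immediate: if $w(x) = w(y)$ and $\alpha(x) = \alpha(y)$, the proposition gives $u_x(\lambda) = u_y(\lambda)$ for every partition $\lambda$, so $u_x \equiv u_y \pmod I$.

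For the reverse direction ($\Rightarrow$), assume $u_x \equiv u_y \pmod I$. The key observation is that $\alpha(x)$ and $\alpha(y)$ each have only finitely many nonzero coordinates (since $x$ and $y$ are finite words), which gives us freedom to design partitions $\lambda$ whose conjugates have prescribed consecutive differences. First I would extract $w(x) = w(y)$: choose any $\lambda$ with $\lambda'_i - \lambda'_{i+1} \geq \max\{\alpha_i(x), \alpha_i(y)\}$ for all $i$, for example a sufficiently large multiple of a staircase. Proposition~\ref{whenzero} then guarantees that both $u_x(\lambda)$ and $u_y(\lambda)$ are nonzero, and equating their conjugates coordinate-by-coordinate yields $w_i(x) = w_i(y)$ for every $i$.

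To force $\alpha(x) = \alpha(y)$, I would argue by contradiction: suppose $\alpha_j(x) < \alpha_j(y)$ for some $j$. Choose a finitely supported sequence $d_1, d_2, \ldots$ of nonnegative integers with $d_j = \alpha_j(x)$ and $d_i \geq \max\{\alpha_i(x), \alpha_i(y)\}$ for $i \neq j$, and set $\lambda'_i = \sum_{k \geq i} d_k$. The finite support of $d_\bullet$ makes this sum finite, so $\lambda$ is a bona fide partition whose conjugate has exactly the designed consecutive gaps. Proposition~\ref{whenzero} then gives $u_x(\lambda) \neq 0$ (all the gap inequalities hold) but $u_y(\lambda) = 0$ (the inequality fails at $j$), contradicting $u_x \equiv u_y \pmod I$.

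The argument is essentially a direct unpacking of Proposition~\ref{whenzero}; no serious obstacle is anticipated. The only step requiring a small amount of care is the realizability of a partition $\lambda$ with prescribed column-height gaps, and this is routine thanks to the eventual vanishing of $\alpha(x)$ and $\alpha(y)$.
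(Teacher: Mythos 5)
Your proposal is correct and follows essentially the same route as the paper: both directions are direct applications of Proposition~\ref{whenzero}, with separating partitions chosen to have prescribed conjugate gaps $\lambda'_i - \lambda'_{i+1}$ (the paper phrases the reverse direction contrapositively, but the constructions are the same). Your extra remarks on realizability of such a $\lambda$ are a harmless elaboration of what the paper leaves implicit.
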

\begin{proof}

If $w(x) = w(y)$ and $\alpha(x) = \alpha(y)$, then $u_x \equiv u_y \pmod I$ by Proposition~\ref{whenzero}.

Conversely, if $w(x) \neq w(u)$, then let $\lambda$ be a partition such that
\[
\alpha_i(x), \alpha_i(y) \leq \lambda'_i - \lambda'_{i+1} \text{ for all } i.
\]
Then $u_x(\lambda) \neq u_y(\lambda)$ by Proposition~\ref{whenzero}, which implies $u_x \not\equiv u_y \pmod I$.
If instead  $\alpha(x) \neq \alpha(y)$, then suppose without loss of generality that $\alpha_j(x) < \alpha_j(y)$ for some $j$.  Choose $\lambda$ such that
$
\alpha_i(x) \leq \lambda'_i - \lambda'_{i+1}  \text{ for all } i
$,
but
$
 \alpha_j(y) > \lambda'_j - \lambda'_{j+1}$.
Then $u_x(\lambda) \neq 0 = u_y(\lambda)$, so again $u_x \not\equiv u_y \pmod I$.
\end{proof}

In other words, a word $u_x$ is determined modulo $I$ by $w(x)$ and $\alpha(x)$.
We next verify that $I$ is a binomial ideal, so that Corollary~\ref{wordsame} essentially determines all of the relations in $I$.

\begin{proposition}
	\label{termcor}
	The ideal $I$ is generated by elements of the form $u_x -u_y$ for words $x$ and $y$ such that $\alpha(x) = \alpha(y)$ and $w(x) = w(y)$.
\end{proposition}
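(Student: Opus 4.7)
My plan is to let $J$ denote the ideal generated by all binomials $u_x - u_y$ with $w(x) = w(y)$ and $\alpha(x) = \alpha(y)$. Corollary~\ref{wordsame} gives $J \subseteq I$ immediately, so the task reduces to showing $I \subseteq J$. I would introduce the equivalence relation $\sim$ on words defined by $x \sim y$ iff $w(x) = w(y)$ and $\alpha(x) = \alpha(y)$, and for each equivalence class $S$ fix a representative $x_S$.

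Given $f \in I$, expand $f = \sum_x c_x u_x$ in the $u_x$ basis. For each $x \in S$, $u_x - u_{x_S} \in J$ by definition, so collecting within each class gives
\[
  f \;\equiv\; \sum_S d_S\, u_{x_S} \pmod{J}, \qquad d_S = \sum_{x \in S} c_x,
\]
with the sum ranging over the finitely many classes appearing in $f$. Since $J \subseteq I$ and $f \in I$, the residual $\sum_S d_S u_{x_S}$ also lies in $I$. Everything therefore reduces to the linear independence claim: \emph{if $\sum_j d_j u_{x_j^*} \in I$ for representatives $x_j^*$ from distinct equivalence classes, then every $d_j = 0$.}

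To prove this I would first separate by weight. By Proposition~\ref{whenzero}, whenever $u_{x_j^*}(\lambda)$ is nonzero it equals a specific partition of size $|\lambda| + |x_j^*|$, so terms with different weights land in disjoint graded components of $\CC[\YY]$ and cannot cancel; thus I may assume all $x_j^*$ share a common weight $w$. Within this fixed-weight setting, Proposition~\ref{whenzero} further shows that $u_{x_j^*}(\lambda)$ is either $0$ or a single partition $\mu(\lambda,w)$ not depending on $j$, with nonvanishing controlled by the condition $\alpha^{(j)}_i \leq \lambda_i' - \lambda_{i+1}'$ for all $i$, where $\alpha^{(j)} = \alpha(x_j^*)$. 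Noting that any tuple $t = (t_1,t_2,\dots) \in \NN^m$ is realized as $(\lambda_i' - \lambda_{i+1}')$ for some partition $\lambda$, annihilation on all of $\YY$ is equivalent to the combinatorial identity
\[
  \sum_{j \,:\, \alpha^{(j)} \leq t} d_j \;=\; 0 \qquad \text{for all } t \in \NN^m.
\]

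The main (and only real) obstacle is to invert this family of equations, but the distinctness of the $\alpha^{(j)}$ makes it routine: pick $j_0$ with $\alpha^{(j_0)}$ minimal in the componentwise order among the $\alpha^{(j)}$, and take $t = \alpha^{(j_0)}$. The only $j$ contributing is $j_0$ itself, forcing $d_{j_0} = 0$. Removing $j_0$ and iterating yields $d_j = 0$ for every $j$, which completes the argument and gives $f \in J$.
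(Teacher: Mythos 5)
Your proposal is correct and follows essentially the same strategy as the paper's proof: reduce modulo the binomial ideal to a combination with one representative per $(w,\alpha)$-class, then evaluate at a partition $\lambda$ with $\lambda_i'-\lambda_{i+1}'$ equal to a suitably minimal $\alpha^{(j)}$ to isolate and kill one coefficient at a time. The only cosmetic difference is that you select a componentwise-minimal $\alpha^{(j)}$ and iterate, whereas the paper takes the lexicographically smallest (which is in particular componentwise-minimal) and argues by contradiction.
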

\begin{proof}
	Let $I'$ be the ideal of $\U$ generated by $u_x - u_y$ as described above. By Corollary~\ref{wordsame}, we have $I' \subseteq I$. Let $R$ be any element of $I$. Then $R \equiv R' \pmod {I'}$ for some
	\[
	R' = \sum_k c_k u_{x(k)},
	\]
	where for each $k$, $x(k)$ is a word, $0 \neq c_k \in \CC$, and $u_{x(k)} \not \equiv u_{x(k')} \pmod {I'}$ for $k \neq k'$.
	
	Fix some weight $w$ and let $x(k_1), x(k_2), \dots$ be those words in $R'$ for which $w(x(k)) = w$, with $\alpha(x(k_1)), \alpha(x(k_2)), \dots$ ordered lexicographically.  We construct a partition $\lambda$ such that
	\[
	\alpha_i(x(k_1)) = \lambda'_i - \lambda'_{i+1} \text{ for all } i.
	\]
	Proposition~\ref{whenzero} gives $u_{x(k_1)}(\lambda) \neq 0$, but $u_{x(k_j)}(\lambda) = 0$ for all $k_j \neq k_1$ since by the lexicographic ordering, $\alpha_i(x(k_j))>\alpha_i(x(k_1)) = \lambda_i'-\lambda_{i+1}'$ for some $i$.    This then implies that $c_{k_1} = 0$ which is a contradiction unless $R' = 0$.  Thus $R \in I'$ and so $I = I'$.  
\end{proof}

We therefore need only determine relations that allow us to equate $u_x$ for all words $x$ with a fixed $\alpha(x)$ and $w(x)$.

Another useful fact about $I$ is that it satisfies a certain shift invariance.

\begin{corollary} \label{addone}
Let $x = x_1 \dots x_l$ and  $y = y_1 \dots y_l$, and define $x' = (x_1+1) \dots (x_l+1)$ and $y' = (y_1+1)  \dots (y_l+1)$.  Then
\[
u_x \equiv u_y \pmod I \quad \text{if and only if} \quad u_{x'} \equiv u_{y'} \pmod I. 
\]
\end{corollary}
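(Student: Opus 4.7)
The plan is to reduce the claim to Corollary~\ref{wordsame}, which characterizes $u_x \equiv u_y \pmod I$ by the joint equality $w(x) = w(y)$ and $\alpha(x) = \alpha(y)$. So it suffices to understand how the shift $x \mapsto x'$ acts on the pair $(w(\cdot), \alpha(\cdot))$ and to check that this action descends to a bijection on the classifying data.

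First I would track the weight. Since every letter of $x'$ is at least $2$, one has $w_1(x') = 0$ and $w_{i+1}(x') = w_i(x)$ for all $i \ge 1$. In particular, $w(x) = w(y)$ if and only if $w(x') = w(y')$, and the weight of $x$ can be recovered from the weight of $x'$ by an index shift.

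Next I would compute $\alpha(x')$. Suffixes of $x'$ correspond bijectively to suffixes of $x$, and for $i \ge 2$ the identity $w_{i+1}(\tilde x') - w_i(\tilde x') = w_i(\tilde x) - w_{i-1}(\tilde x)$ gives $\alpha_i(x') = \alpha_{i-1}(x)$. For the boundary index $i = 1$, the same manipulation yields $w_2(\tilde x') - w_1(\tilde x') = w_1(\tilde x)$, whose maximum over suffixes of $x$ is attained at $\tilde x = x$ and equals $w_1(x)$; thus $\alpha_1(x')$ is already forced by $w(x)$. Assembling the two observations, the pair $(w(x), \alpha(x))$ determines and is determined by $(w(x'), \alpha(x'))$, at which point the corollary follows from Corollary~\ref{wordsame} applied in both directions.

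I do not expect any serious obstacle; the argument is essentially bookkeeping once Corollary~\ref{wordsame} is in hand. The only subtle point is the new boundary index $i = 1$ for $\alpha(x')$, which has no counterpart in $\alpha(x)$. The task there is to check that this extra component carries no information beyond what $w(x)$ already provides, and the computation $\alpha_1(x') = w_1(x)$ above is precisely this verification.
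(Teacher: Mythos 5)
Your proposal is correct and follows essentially the same route as the paper: both reduce to Corollary~\ref{wordsame} via the shift identities $w_{i+1}(x') = w_i(x)$ and $\alpha_{i+1}(x') = \alpha_i(x)$. You are in fact slightly more careful than the paper's one-line proof, since you explicitly verify that the boundary component $\alpha_1(x') = w_1(x)$ carries no extra information.
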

\begin{proof}
Since
\begin{alignat*}{2}
\alpha_i(x') &= \alpha_{i-1}(x), \quad & w_i(x') &= w_{i-1}(x),\\
\alpha_i(y')  &= \alpha_{i-1}(y), \quad & w_i(y') &= w_{i-1}(y),
\end{alignat*}
for all $i$, the result follows by Corollary~\ref{wordsame}.
\end{proof}

For the rest of this section, we will let $J$ denote the ideal generated by relations (1)--(4). We first verify that these relations all lie in $I$.

\begin{proposition}
\label{forwardirection}
The relations (1)--(4) hold in $\U/I$, or equivalently, $J \subseteq I$.
\end{proposition}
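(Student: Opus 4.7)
The plan is to reduce the proposition to the numerical criterion provided by Corollary~\ref{wordsame}: two words give equivalent operators modulo $I$ if and only if they have the same weight and the same vector $\alpha$. So for each of (1)--(4), I would simply verify these two equalities between the words appearing on the two sides.

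The weight condition is immediate in every case, since the two sides of each relation involve exactly the same multiset of letters. Moreover, by Corollary~\ref{addone}, equivalences are preserved under uniformly shifting all indices up by one, so it suffices to check each relation for a single fixed value of $i$. Since the letters appearing in any one relation lie in a window of at most three consecutive integers $\{i, i+1\}$ or $\{i, i+1, i+2\}$, the only components of $\alpha$ that can possibly be nonzero are $\alpha_{i-1}, \alpha_i, \alpha_{i+1}$, and (for relation (4)) $\alpha_{i+2}$. Thus each of the four checks reduces to a small finite computation.

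I would then carry out these $\alpha$-checks by listing the suffixes $\tilde x$ of each side and computing $w_{k+1}(\tilde x)-w_k(\tilde x)$ for each of the relevant $k$. For (1), the index set $\{i-1,i\}$ affected by the letter $i$ is disjoint from the index set $\{j-1,j\}$ affected by $j$ (since $|j-i|\geq 2$), so the two commuting letters contribute independently and the maxima match. For (2) and (3), the verification is a short enumeration of three proper suffixes of each side. For (4), each of the four-letter words $(i+1)(i+2)(i+1)i$ and $(i+1)(i+2)i(i+1)$ has five suffixes; running through them, one obtains $\alpha_{i-1}=1$, $\alpha_i=1$, $\alpha_{i+1}=0$, $\alpha_{i+2}=0$ in both cases, so $\alpha$ agrees.

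The routine calculations aside, the only genuinely delicate point is relation (4): moving the trailing $i$ past the interior $(i+1)$ could in principle alter the suffix maximum of $w_{i+1}-w_i$, and one has to notice that although the individual suffixes change, the maximum value $1$ is still attained (on one side by the full word, on the other already by the one-letter suffix $(i+1)$). Once this case is confirmed the remaining three relations are immediate, so the bulk of the writing is just exhibiting the tables of suffix values and invoking Corollary~\ref{wordsame}.
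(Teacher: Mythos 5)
Your proposal is correct and follows essentially the same route as the paper: reduce to $i=1$ via Corollary~\ref{addone}, then apply Corollary~\ref{wordsame} and check that $w$ and $\alpha$ agree on both sides of each relation (the paper records, e.g., $\alpha(2321)=\alpha(2312)=(1,0)$ for relation (4), matching your computation). Nothing further is needed.
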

\begin{proof}
By Corollary~\ref{addone}, we may take $i=1$. Thus by Corollary~\ref{wordsame}, we need only check that $w(x) = w(y)$ and $\alpha(x) = \alpha(y)$ for the appropriate words on both sides of the relation. This is straightforward: for instance, for relation (4),
\begin{align*}
w(2321) &= w(2312) = (1,2,1),\\
\alpha(2321) &= \alpha(2312) = (1,0).
\end{align*}
The other relations follows similarly.
\end{proof}

In particular, we note the following relationship with Knuth equivalence.

\begin{lemma}
\label{knuthequal}
Let $x$ and $y$ be words such that $x \stackrel{K}{\sim} y$. Then $u_x \equiv u_y \pmod J$.
\end{lemma}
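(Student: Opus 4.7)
The plan is to reduce to the two elementary Knuth moves and show each is a consequence of relations (1)--(3). Since Knuth equivalence is defined as the transitive closure of these elementary swaps applied anywhere inside a word, and since $J$ is a two-sided ideal (so $u_x - u_y \in J$ implies $u_{z} u_x u_{z'} - u_{z} u_y u_{z'} \in J$ for any words $z, z'$), it suffices by induction on the number of Knuth moves to handle the length-three building blocks. Concretely, I will verify:
\begin{align*}
u_b u_a u_c &\equiv u_b u_c u_a \pmod J \qquad \text{when } a < b \leq c, \\
u_a u_c u_b &\equiv u_c u_a u_b \pmod J \qquad \text{when } a \leq b < c.
\end{align*}

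For each of these I would split on the size of $c - a$. If $c \geq a+2$, then by Corollary~\ref{addone} and relation (1) we have $u_a u_c \equiv u_c u_a \pmod J$, which handles both cases immediately. This leaves the boundary case $c = a + 1$. In the first Knuth move the hypothesis $a < b \leq c = a+1$ forces $b = a+1$, so the statement becomes $u_{a+1} u_a u_{a+1} \equiv u_{a+1} u_{a+1} u_a$, which is exactly relation (3) (with $i = a$). In the second move, $a \leq b < c = a+1$ forces $b = a$, so the statement becomes $u_a u_{a+1} u_a \equiv u_{a+1} u_a u_a$, which is exactly relation (2). So every elementary Knuth move lies in $J$, and the lemma follows.

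There is no real obstacle: relation (4) is not even needed here, which is consistent with the fact that the local plactic relations are a quotient of the plactic monoid, so Knuth-equivalent words must already be identified modulo the local plactic relations (1)--(3). The only thing worth being careful about is bookkeeping: making sure each case check of the hypotheses $a < b \leq c$ or $a \leq b < c$ forces the right values of $b$ when $c = a+1$, and invoking the ideal property of $J$ to upgrade a length-three identity to one inside an arbitrary word.
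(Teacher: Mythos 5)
Your proof is correct and follows essentially the same route as the paper's, which is a one-sentence version of the same case analysis: an elementary Knuth move swapping $a<c$ is an instance of relation (1) when $|a-c|\geq 2$ and of relation (2) or (3) when $c=a+1$. Your additional bookkeeping (the ideal property of $J$, induction on the number of moves, and checking that $b$ is forced when $c=a+1$) is all accurate; the appeal to Corollary~\ref{addone} is unnecessary since relation (1) already covers all $i,j$ with $|j-i|\geq 2$.
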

\begin{proof}
	If $x$ and $y$ are related by a Knuth move that switches $a < c$, then $u_x \equiv u_y \pmod I$ by (1) if $|a-c| \geq 2$ or by (2) or (3) if $c = a+1$.
%
\end{proof}

We next demonstrate that Knuth equivalence is sufficient to describe equivalence modulo $I$ for words in two letters $i$ and $i+1$.

\begin{proposition} \label{2letterprop}
Let $x$ and $y$ be words in $i$ and  $i+1$. Then $u_x \equiv u_y \pmod I$  if and only if $u_x \equiv u_y \pmod J$. 
\end{proposition}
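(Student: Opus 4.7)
The backward direction is immediate because $J \subseteq I$ by Proposition~\ref{forwardirection}. For the forward direction, the plan is to use RSK to reduce $I$-equivalence to Knuth equivalence in the two-letter case. First I would apply Corollary~\ref{addone} to assume $i = 1$, so $x$ and $y$ are words in $\{1, 2\}$. Then Corollary~\ref{wordsame} gives $w(x) = w(y)$ and $\alpha_1(x) = \alpha_1(y)$ (the other coordinates of $\alpha$ are automatically zero). The goal is to show that these two invariants determine the RSK insertion tableau of a $\{1,2\}$-word, so that $x \Kequiv y$ and Lemma~\ref{knuthequal} finishes the argument.

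Next I would classify semistandard tableaux on the alphabet $\{1, 2\}$: any such $P$ has at most two rows, and writing its shape as $(a + b + c,\, c)$, the column-strictness forces the first $c$ cells of row 1 to be $1$'s and all of row 2 to be $2$'s, so row 1 reads $a + c$ ones followed by $b$ twos. Thus $w(P) = (a + c,\, b + c)$, and for fixed weight $(p, q)$ the tableau $P$ is determined by any one of the parameters $a$, $b$, $c$. Its reading word is $\rw(P) = 2^c \cdot 1^{a+c} \cdot 2^b$, and a direct suffix check shows $\alpha_1(\rw(P)) = b$: the suffix $2^b$ already attains $w_2 - w_1 = b$, extending it backward through the block $1^{a+c}$ only decreases the value, and prepending the initial $2^c$ contributes at most $c - (a + c) = -a \leq 0$ on top of that.

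Finally I would invoke that $\alpha$ is a Knuth invariant: Knuth-equivalent words are $J$-equivalent by Lemma~\ref{knuthequal}, hence $I$-equivalent, hence share the same $\alpha$ by Corollary~\ref{wordsame}. Since $x$ and $\rw(P_x)$ are Knuth equivalent (both insert to $P_x$), I obtain $\alpha_1(x) = \alpha_1(\rw(P_x)) = b$, where $b$ is the parameter for $P_x$ identified above. Combined with $w(x)$, this pins $P_x$ down uniquely, and similarly for $P_y$; since these data agree for $x$ and $y$, we get $P_x = P_y$, so $x \Kequiv y$, and Lemma~\ref{knuthequal} yields $u_x \equiv u_y \pmod J$. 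The main obstacle is the $\alpha_1$ computation that ties the algebraic invariant $\alpha_1$ to the structural parameter $b$ of the insertion tableau; once that identification is in place, everything else is reduction and bookkeeping.
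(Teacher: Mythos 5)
Your proposal is correct and follows essentially the same route as the paper: both reduce the forward direction to showing that $w(x)$ and $\alpha(x)$ determine the RSK insertion tableau of a two-letter word, using the chain (Knuth equivalence $\Rightarrow$ $J$-equivalence $\Rightarrow$ $I$-equivalence $\Rightarrow$ equal $\alpha$) to identify $\alpha_1(x)$ with the number of $(i{+}1)$'s in the first row of $P$. Your explicit suffix computation of $\alpha_1(\rw(P))=b$ just spells out what the paper asserts as ``clearly'' true.
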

\begin{proof}
	We claim that the insertion tableau of $x$ is determined by $w(x)$ and $\alpha(x)$. Indeed, since $P$ is semistandard and contains only $i$'s and $i+1$'s, it has at most two rows, and $i$ can only appear in the first row. Then given $w(x) = w(P)$, all that needs to be determined is the number of $i+1$'s in the first row. Since $x \Kequiv \rw(P)$, Corollary~\ref{wordsame} implies that $\alpha_i(x) = \alpha_i(\rw(P))$. But this is clearly the number of $i+1$'s in the first row of $P$ (since $P$ has at least as many $i$'s in its first row as $i+1$'s in its second row).
%

We can now prove the proposition.  The reverse direction follows from Proposition~\ref{forwardirection}, so suppose $u_x \equiv u_y \pmod I$.  By Corollary~\ref{wordsame}, we have $\alpha(x) = \alpha(y)$ and $w(x) = w(y)$.  Hence $x$ and $y$ must have the same insertion tableau by the above claim, so $x \Kequiv y$.  By Lemma~\ref{knuthequal}, it then follows that $u_x \equiv u_y \pmod J$. 
\end{proof}

Note that we have shown that if our words only contain two consecutive letters, then only relations (2) and (3) are needed to determine equivalence modulo $I$.




\subsection{Key lemmas}

When dealing with three or more letters, we will need to utilize relations (1) and (4). The following two lemmas will show the key contexts in which these relations will be used.

Denote by $x[i, j]$ the subword of $x$ consisting only of the letters $i, i+1, \dots,  j$.  For instance, if $x = 1432212$, then $x[1,2] = 12212$, and $x[2,4] = 43222$.

\begin{lemma}
\label{samelemma}
Let  $x$ and $y$ be words in $1, \dots, n$.  If  $x[1, 2] = y[1, 2]$ and $x[2, n] = y[2, n]$, then $u_x \equiv u_y$ modulo relation (1), that is, they are equivalent up to commutation relations.
\end{lemma}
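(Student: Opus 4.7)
My plan is to prove this by putting both $x$ and $y$ into a common normal form using only swaps of the form $u_1 u_j = u_j u_1$ for $j \geq 3$ (an instance of relation (1)).

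First, I would define the normal form $x^*$ of a word $x$ in $1, \dots, n$ as follows: in each maximal block of letters in $\{1, 3, 4, \dots, n\}$ (i.e., each ``gap'' between consecutive $2$'s, or before the first $2$, or after the last $2$), move all occurrences of $1$ to the rightmost positions of that block while keeping the letters $\geq 3$ in their original relative order. I would then argue that $u_x \equiv u_{x^*}$ modulo relation (1) alone: the transformation from $x$ to $x^*$ can be accomplished by repeatedly swapping a $1$ with an adjacent letter $k \geq 3$ on its right, and each such swap is legal since $|k - 1| \geq 2$. The $1$'s never need to cross a $2$ because all $1$'s stay within their original gap.

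Next I would verify that $x^*$ is determined by the data $x[1,2]$ and $x[2,n]$. From $x[2,n]$ one recovers the sequence of non-$1$ letters of $x$, and in particular the list of letters $\geq 3$ sitting between each pair of consecutive $2$'s (as well as before the first $2$ and after the last). From $x[1,2]$ one reads off the number of $1$'s that belong in each such gap. Placing these $1$'s at the right end of each gap reconstructs $x^*$ uniquely. Hence if $x[1,2] = y[1,2]$ and $x[2,n] = y[2,n]$, then $x^* = y^*$, and we conclude $u_x \equiv u_{x^*} = u_{y^*} \equiv u_y$ modulo relation (1).

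There isn't really a serious obstacle here; the only point to watch is to make sure the commutation moves we invoke always involve a $1$ and a letter that is at least $3$, so that relation (1) genuinely applies. Since $1$'s never need to move past a $2$ (they only travel rightward within their own gap), this condition is automatic, and the normalization procedure terminates in finitely many such swaps.
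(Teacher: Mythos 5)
Your proposal is correct and follows essentially the same approach as the paper: both decompose the word into blocks separated by the $2$'s, observe that each block is a shuffle of some $1^{n_i}$ with a word in letters $\geq 3$ determined by $x[1,2]$ and $x[2,n]$ respectively, and use commutations of $1$ with letters $\geq 3$ to reach a common normal form (the paper sorts the $1$'s to the left of each block rather than the right, which is immaterial).
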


\begin{proof}
	Note that $x[1,2]$ and $x[2,n]$ must have the same number of occurrences of $2$. Then   
\[
\begin{array}{ccccccccccc}
x[1, 2] = y[1, 2] &=& 1^{n_1} &2 &1^{n_2} &2 &1^{n_3} &2& \dots &2& 1^{n_k}, \\
x[2, n] = y[2, n] &=& m^{(1)} &2 &m^{(2)} &2 &m^{(3)} &2& \dots &2& m^{(k)},
\end{array}
\]
where $m^{(j)}$ is a word in $3, \dots, n$ for all $j = 1, \dots, k$.  Then we must have that
\[
\begin{array}{ccccccccccc}
x &=& m_x^{(1)} &2 &m_x^{(2)} &2 &m_x^{(3)} &2 &\dots &2 & m_y^{(k)}, \\
y &=&  m_y^{(1)} &2 &m_y^{(2)} &2 &m_y^{(3)} &2 &\dots &2 & m_y^{(k)},
\end{array}
\]
where $m_x^{(i)}$ and $m_y^{(i)}$ are both words obtained by shuffling together $1^{n_i}$ and $m^{(i)}$.
%
But
$u_{m_x^{(i)}}$ and $u_{m_y^{(i)}}$ are both equivalent modulo relation (1) to $u_1^{n_i}u_{m^{(i)}}$ and hence to each other. It follows that $u_x$ and $u_y$ are also equivalent modulo relation (1).
%
%
%
\end{proof}

The next lemma shows the key application of relation (4). For ease of notation, we will abbreviate $u_1, u_2, \dots$ by $\s{1}, \s{2}, \dots$.

\begin{lemma}
\label{degreefourlemma}
For any positive integer $k$, we have the relations
\begin{align*}
  u_{i+1} u_{i+2}^k u_{i+1} u_i &\equiv u_{i+1} u_{i +2}^k u_i u_{i+1} \pmod J,\\
    u_{i+1} u_{i+2} u_{i}^k u_{i+1} &\equiv u_{i+2} u_{i +1} u_i^k u_{i+1} \pmod J.
  \end{align*}

\end{lemma}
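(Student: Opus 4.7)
The plan is to reduce both identities to the base case $k = 1$, which is precisely relation (4). In each identity, the ``bulk'' of the word is a commuting block ($u_{i+2}^k$ or $u_i^k$) and a single $u_{i+2}$ or $u_i$ must be peeled off so that we can isolate a length-4 subword of the form $u_{i+1} u_{i+2} u_{i+1} u_i$ (or $u_{i+1} u_{i+2} u_i u_{i+1}$) on which to apply relation (4).

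First I would prove two ``power-shift'' identities valid modulo $J$ for every $k \geq 1$:
\begin{align*}
u_{i+2}^k u_{i+1} &\equiv u_{i+2} u_{i+1} u_{i+2}^{k-1},\\
u_{i+1} u_i^k &\equiv u_i^{k-1} u_{i+1} u_i.
\end{align*}
Both are trivial at $k=1$ and follow by a short induction on $k$: the first uses relation (3) (which reads $u_{i+2}^2 u_{i+1} \equiv u_{i+2} u_{i+1} u_{i+2}$), and the second uses relation (2) (which reads $u_{i+1} u_i^2 \equiv u_i u_{i+1} u_i$). The inductive step for each consists of peeling off one factor and invoking the case $k=2$ on the resulting product.

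For the first identity of the lemma, apply the first power-shift to $u_{i+2}^k u_{i+1}$ to rewrite
\[u_{i+1} u_{i+2}^k u_{i+1} u_i \equiv u_{i+1} u_{i+2} u_{i+1} u_{i+2}^{k-1} u_i \equiv u_{i+1} u_{i+2} u_{i+1} u_i u_{i+2}^{k-1},\]
using (1) to commute $u_{i+2}^{k-1}$ past $u_i$. Now apply relation (4) to the leading four letters, commute again by (1), and reverse the power-shift:
\[\equiv u_{i+1} u_{i+2} u_i u_{i+1} u_{i+2}^{k-1} \equiv u_{i+1} u_i u_{i+2} u_{i+1} u_{i+2}^{k-1} \equiv u_{i+1} u_i u_{i+2}^k u_{i+1} \equiv u_{i+1} u_{i+2}^k u_i u_{i+1}.\]
For the second identity, the same scheme works: first commute $u_{i+2}$ past $u_i^k$ using (1), then apply the second power-shift to expose $u_{i+1} u_i$ at the front, commute to produce the subword $u_{i+1} u_{i+2} u_i u_{i+1}$, apply (4), and use relations (2) and (3) to collapse $u_{i+1} u_{i+2} u_{i+1} u_i$ into $u_{i+2} u_{i+1} u_i u_{i+1}$; finally commute $u_{i+2}$ back to the front and reverse the power-shift to reach $u_{i+2} u_{i+1} u_i^k u_{i+1}$.

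The main obstacle is purely bookkeeping, not conceptual: each step is an application of (1), (2), (3), (4), or one of the two power-shifts, but one must be careful to track which factors are available to commute at each stage. There is no need for Corollary~\ref{wordsame} or for an additional Knuth-equivalence argument, since every move is explicit in $J$.
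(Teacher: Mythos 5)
Your proposal is correct and follows essentially the same route as the paper: both proofs isolate a single $u_{i+2}$ (resp.\ $u_i$) via iterated applications of (3) (resp.\ (2)) — your ``power-shift'' identities — then commute with (1) to expose a length-four window on which relation (4) applies, and finally undo these moves; your step-by-step chains for both identities match the paper's almost verbatim. The only difference is organizational: you package the iterated (2)/(3) moves as named sub-identities, which the paper performs inline.
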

\begin{proof}
We may assume $i=1$.  Then (using the relations indicated)
\begin{alignat*}{3}
    \s{23}^k\s{12} &\equiv  \s{213}^k\s{2} &\qquad&(1) \\
    &\equiv \s{21323}^{k-1} &&(3) \\
    &\equiv \s{23123}^{k-1} &&(1) \\
    &\equiv \s{23213}^{k-1} &&(4) \\
    &\equiv \s{2323}^{k-1}\s{1} &&(1) \\
    &\equiv \s{23}^k\s{21} &&(3) &.
\end{alignat*}
Similarly,
\begin{alignat*}{3}
    \s{231}^k\s{2} &\equiv \s{21}^k\s{32} &\qquad&(1) \\
    &\equiv \s{1}^{k-1}\s{2132} &&(2) \\
    &\equiv \s{1}^{k-1}\s{2312} &&(1) \\
    &\equiv \s{1}^{k-1}\s{2321} &&(4) \\
    &\equiv \s{1}^{k-1}\s{3221} &&(2) \\
    &\equiv \s{1}^{k-1}\s{3212} &&(3) \\
    &\equiv \s{31}^{k-1}\s{212} &&(1) \\
    &\equiv \s{321}^k\s{2} &&(2).
\end{alignat*}
\end{proof}

\subsection{Main result}

We are now ready to prove our main theorem.

\begin{theorem}
\label{tabletheorem}
For words $x$ and $y$, $u_x \equiv u_y \pmod I$ if and only if $u_x \equiv u_y \pmod J$.
\end{theorem}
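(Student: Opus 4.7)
The reverse direction is immediate from Proposition~\ref{forwardirection}. For the forward direction, I would proceed by induction on $n$, the largest letter appearing in $x$ or $y$. The base case $n \leq 2$ is exactly Proposition~\ref{2letterprop}, and for $n \geq 3$ the inductive hypothesis together with Corollary~\ref{addone} gives the conclusion for all words whose letters all lie in $\{2, \ldots, n\}$.

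The overall plan is to produce a word $z$ with $z \equiv x \pmod J$ satisfying both $z[1, 2] = y[1, 2]$ and $z[2, n] = y[2, n]$; then Lemma~\ref{samelemma} yields $z \equiv y \pmod J$ and the proof is complete. I would construct $z$ in two phases. In Phase 1, since $w_i$ and $\alpha_i$ for $i \geq 2$ depend only on the $[2, n]$-subword, we have $w(x[2, n]) = w(y[2, n])$ and $\alpha_i(x[2, n]) = \alpha_i(y[2, n])$ for $i \geq 2$, so the inductive hypothesis yields a sequence of $J$-moves in $\{2, \ldots, n\}$ carrying $x[2, n]$ to $y[2, n]$. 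Each such move lifts to a $J$-move on the full word that preserves $x[1, 2]$: for a move involving only letters $\geq 3$, commute any intervening $\s{1}$s past those letters using relation~(1), apply the move, and commute back; for a move involving $\s{2}$ (i.e., relation~(1), (2), (3), or (4) with $i = 2$), use relation~(1) to coalesce the intervening $\s{1}$s into a single block, then Lemma~\ref{degreefourlemma} (or the relation directly when no $\s{1}$s intervene) supplies the required transformation. A short case check in each instance confirms that $x[1, 2]$ is unchanged.

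In Phase 2, let $x'$ denote the word produced by Phase 1. Since $\alpha_1(x') = \alpha_1(x) = \alpha_1(y)$ and the weights still agree, Proposition~\ref{2letterprop} yields a sequence of $\{1, 2\}$-moves (instances of relations~(2) and~(3)) transforming $x'[1, 2]$ to $y[1, 2]$. Each such move must be lifted to a full-word $J$-move that preserves $x'[2, n]$. Lifting relation~(2) is routine: commute the $\s{1}$s past any intervening letters $\geq 3$ using relation~(1), apply~(2), and commute back. The main obstacle is lifting relation~(3), which reduces to the auxiliary claim that for any word $A$ in the letters $\{3, \ldots, n\}$, the words $\s{2}\, A\, \s{2}\, \s{1}$ and $\s{2}\, A\, \s{1}\, \s{2}$ are $J$-equivalent. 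I would prove this by first using Proposition~\ref{splitprop} together with Lemma~\ref{knuthequal} to replace $A$ by a Knuth-equivalent word of the form $B_1\, \s{3}^k\, B_2$ with $B_1, B_2$ consisting only of letters $\geq 4$, then commuting $B_1$ and $B_2$ past the surrounding $\s{2}$ and $\s{1}$ (valid since their letters are $\geq 4$), and finally applying Lemma~\ref{degreefourlemma} to the exposed core $\s{2}\, \s{3}^k\, \s{2}\, \s{1}$.

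Combining the two phases produces $z$ as desired, and Lemma~\ref{samelemma} finishes the proof. The most delicate step is the lifting for relation~(3) in Phase~2, which hinges on the auxiliary claim and makes essential use of both Proposition~\ref{splitprop} and Lemma~\ref{degreefourlemma}.
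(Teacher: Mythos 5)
Your proposal is correct and follows essentially the same route as the paper's proof: induction on the largest letter, splitting a word into its $[1,2]$- and $[2,n]$-subwords glued along the letter $2$ (with Lemma~\ref{samelemma} handling the commutations), reducing to single relation applications, and resolving the two genuinely hard cases --- relation (3) with $i=1$ and relation (2) with $i=2$ --- via Proposition~\ref{splitprop}, Lemma~\ref{knuthequal}, and Lemma~\ref{degreefourlemma}. The only cosmetic differences are that you perform the two reductions in the opposite order and construct the intermediate word $z$ by explicitly executing the moves rather than defining it in advance and showing $u_x \equiv u_z$ and $u_y \equiv u_z$ separately.
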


\begin{proof}
The reverse direction is proven in Proposition~\ref{forwardirection}, so we need only consider the forward direction. We will induct on $n$, the largest letter appearing in $x$ and $y$. The case $n =1$ is trivial, while the case $n=2$ follows from Proposition~\ref{2letterprop}.

Assume the statement holds for words in letters $1, \dots, n-1$ (and hence for words in any $n-1$ consecutive letters by Corollary~\ref{addone}).  Since $x$ and $y$ have the same number of $2$'s, we can construct a word $z$ in letters $1, \dots, n$ such that $x[2,n] = z[2,n]$ and $y[1,2] = z[1,2]$.  We will then show $u_x \equiv u_z \pmod J$ and $u_y \equiv u_z \pmod J$, which will imply $u_x \equiv u_y \pmod J$.

By assumption we have $u_x \equiv u_y \pmod I$, and so by Corollary~\ref{wordsame},
\begin{align*}
u_{x[1,2]} &\equiv u_{y[1,2]} = u_{z[1,2]} \pmod I,  \\
u_{y[2,n]} &\equiv u_{x[2,n]} = u_{z[2,n]} \pmod I.
\end{align*}
By the inductive hypothesis, we then have
\begin{align}
u_{x[1,2]} &\equiv u_{z[1,2]} \pmod J,\\
u_{y[2,n]} &\equiv u_{z[2,n]}  \pmod J.
\end{align}

We therefore need to show that if $u_{x[1,2]} \equiv u_{z[1,2]} \pmod J$ as in (5) and $x[2,n] = z[2,n]$, then $u_x \equiv u_z \pmod J$, and similarly for $y$ and $z$ as in (6). It suffices to check when the two sides of (5) or (6) differ by a single application of one of the relations (1)--(4).

First suppose the relation $u_m \equiv u_{m'}$ used in (5) involves at most one $u_2$. This will be the case unless we are applying (3) with $i=1$. Note that $m$ may not be a consecutive subword inside $x$ because there may be letters $i > 2$ that occur in between the letters of $m$ in $x$. However, by Lemma~\ref{samelemma}, since there is only one occurrence of $2$ in $m$, we can commute these intervening letters to the left or right to get some $u_{x'}$ equivalent to $u_x$ such that $x'$ has $m$ as a consecutive subword. Replacing $m$ with $m'$ in $x'$ then gives a word $z'$ such that $z'[1,2] = z[1,2]$ and $z'[2,n] = z[2,n]$. Hence 
\[u_x \equiv u_{x'} = \ldots u_m \ldots \equiv \ldots u_{m'} \ldots = u_{z'} \equiv u_z.\]

A similar argument holds if the relation $u_m \equiv u_{m'}$ used in (6) involves at most one $u_2$. This will be the case unless we are applying (2) with $i=2$. Hence it remains to check only these remaining two cases.

Suppose in equivalence (5) we are applying (3) with $i=1$ by replacing $\s{221} \equiv \s{212}$. As above, $221$ and $212$ need not appear consecutively inside $x$ and $z$ since there may be intervening letters $i > 2$. However, we may as above commute any such letters not appearing between the $2$'s to the right to get words $x'$ and $z'$ such that
\begin{align*}
x'&=\dots2m21\dots,\\
z'&=\dots2m12\dots,
\end{align*}
where $m$ is a word in  $3, \dots, n$.

By Proposition~\ref{splitprop}, $m \Kequiv m' 3^k m''$ for some words $m'$ and $m''$  in letters $4, \dots, n$.  Lemma~\ref{knuthequal} then gives $u_m \equiv u_{m'} \s{3}^k u_{m''} \pmod J$.  We then have:
\begin{alignat*}{2}
u_x &\equiv \dots \s{2} u_m \s{2 1} \dots \qquad \qquad \qquad &&(\text{Lemma }\ref{samelemma}) \\ 
&\equiv \dots  \s{2} u_{m'} \s{3}^k  u_{m''} \s{2 1}  \dots &&(\text{Lemma~\ref{knuthequal}})  \\ 
&\equiv \dots u_{m'} \s{2} \s{3}^k \s{2 1}  u_{m''} \dots &&(1) \\ 
&\equiv   \dots u_{m'} \s{2} \s{3}^k \s{1 2} u_{m''} \dots  &&(\text{Lemma~\ref{degreefourlemma}}) \\
&\equiv \dots \s{2}  u_{m'} \s{3}^k  u_{m''} \s{1 2} \dots &&(1) \\ 
&\equiv \dots \s{2} u_m \s{1 2} \dots &&(\text{Lemma~\ref{knuthequal}})  \\ 
&\equiv u_z &&(\text{Lemma~\ref{samelemma}}).
\end{alignat*}

Similarly, if in equivalence (6) we are applying (2) with $i=2$ by replacing $\s{232} \equiv \s{322}$, then we may commute out any 1's not appearing between the 2's to get:
\begin{alignat*}{2}
u_z &\equiv \dots \s{2  3 1}^k \s{2} \dots \qquad &&(\text{Lemma~\ref{samelemma}}) \\
&\equiv \dots \s{3 2 1}^k \s{2} \dots  &&(\text{Lemma~\ref{degreefourlemma}}) \\
&\equiv u_y &&(\text{Lemma~\ref{samelemma}}). 
\end{alignat*}
\end{proof}


\begin{corollary}
	The algebra of Schur operators is defined by the relations:
	\begin{align*}
	u_iu_j &\equiv u_j u_i  \qquad \text{ for } |j - i| \geq 2, \\
	u_i u_{i+1} u_i &\equiv u_{i+1} u_i u_i, \\
	u_{i+1} u_{i+1} u_i &\equiv u_{i+1} u_i u_{i+1},  \\
	u_{i+1} u_{i+2} u_{i+1} u_i &\equiv u_{i+1} u_{i +2} u_i u_{i+1}. 
	\end{align*}
\end{corollary}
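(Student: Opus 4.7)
The plan is to observe that this corollary is essentially a repackaging of Theorem~\ref{tabletheorem} combined with the binomial structure of $I$ established in Proposition~\ref{termcor}. What we need to show is the ideal equality $I = J$, where $J$ is the ideal generated by relations (1)--(4). The forward inclusion $J \subseteq I$ is already handled by Proposition~\ref{forwardirection}, so the entire content of the corollary reduces to establishing $I \subseteq J$.

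For the reverse inclusion, I would invoke Proposition~\ref{termcor}, which says that $I$ is generated as an ideal by the binomials $u_x - u_y$ with $w(x)=w(y)$ and $\alpha(x) = \alpha(y)$. By Corollary~\ref{wordsame}, each such generator satisfies $u_x \equiv u_y \pmod I$. Theorem~\ref{tabletheorem} then upgrades this to $u_x \equiv u_y \pmod J$, so every generator of $I$ lies in $J$, and hence $I \subseteq J$.

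Combining these two inclusions gives $I = J$, which is exactly the statement that the algebra $\U/I$ of Schur operators is presented by the generators $u_i$ subject to the relations (1)--(4). There is no real obstacle at this stage since all the substantive work (the binomial reduction, the characterization via $w$ and $\alpha$, the two-letter case via Knuth equivalence, and the key degree-four manipulations in Lemma~\ref{degreefourlemma}) has been completed in Theorem~\ref{tabletheorem} and its supporting propositions; the corollary is purely a bookkeeping step that translates the word-level equivalence statement into the ideal-theoretic presentation.
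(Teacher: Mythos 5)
Your proposal is correct and follows exactly the paper's own argument: reduce to the binomial generators of $I$ via Proposition~\ref{termcor}, then apply Theorem~\ref{tabletheorem} to show each generator lies in $J$, with $J \subseteq I$ coming from Proposition~\ref{forwardirection}. Your version just spells out the two inclusions slightly more explicitly than the paper does.
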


\begin{proof}
	By Proposition~\ref{termcor}, $I$ is generated by elements of the form $u_x - u_y$.  Theorem~\ref{tabletheorem} then shows that the above relations generate all such elements.
\end{proof}

\begin{example}
Consider the words 
\begin{alignat*}{3}
x &= 23443231,& \qquad x[1,2] &= 221,& \qquad x[2,4] &= 2344323,\\
y &= 23443132,& \qquad y[1,2] &= 212,& \qquad y[2,4] &= 2344332.
\end{alignat*}
 Using the construction described in Theorem~\ref{tabletheorem}, we consider the word 
\[
 z = 23443123, \qquad z[1,2] = 212 = y[1,2], \qquad z[2,4] = 2344323 = x[2,4].
 \]
Note that $x[1,2]$ and $z[1,2]$ differ by a single application of (3) with $i=1$, but these subwords do not appear consecutively within $x$ or $z$. As in the proof of Theorem~\ref{tabletheorem}, we can rewrite the part of $x$ and $z$ between the $2$'s using the Knuth moves $3443\Kequiv 3434 \Kequiv 4334$ to get the $3$'s in the middle so that we can then use commutations to get a consecutive subword of the form $23^k21$. We then use Lemma~\ref{degreefourlemma}, followed by the reverse of the previous procedure:
\begin{alignat*}{2}
u_x = \s{23443231} 
&\equiv \s{24334231} \qquad\qquad\qquad &&(\text{Lemma~\ref{knuthequal}}) \\ 
&\equiv \s{42332143}    &&(1) \\
&\equiv  \s{42331243} &&(\text{Lemma~\ref{degreefourlemma}}) \\ 
&\equiv \s{24334123} &&(1)  \\ 
&\equiv \s{23443123} = u_z  &&(\text{Lemma~\ref{knuthequal}}).
\end{alignat*}
Since $y[2,4]$ and $z[2,4]$ differ by a relation that only involves a single $2$, we need only use commutations before we can apply the appropriate relation (3):
\begin{alignat*}{2}
u_z = \s{23443123} &\equiv \s{23441323} \qquad \qquad \qquad&&(1)  \\
&\equiv \s{23441332}  &&(3)\\
&\equiv \s{23443132} = u_y &&(1).
\end{alignat*}
\end{example}

\section{Acknowledgments}
The authors would like to thank Sergey Fomin for bringing this problem to their attention and for useful conversations.

\bibliographystyle{plain} 
\bibliography{references}

\end{document}